\documentclass[11pt]{article}
\usepackage{graphicx}
 \usepackage[reqno, namelimits, sumlimits]{amsmath}
   \usepackage{amsfonts}
   \usepackage{amssymb}
  \usepackage{amsthm}
   \numberwithin{equation}{section}

\newtheorem{thm}{Theorem}[section]
\newtheorem{corollary}[thm]{Corollary}
\newtheorem{lemma}[thm]{Lemma}
\newtheorem{proposition}[thm]{Proposition}

\newtheorem{remark}[thm]{Remark}

\def\R{{\mathbb R}}

\def\N{{\mathbb N}}

\font\textmsbm=msbm10
\newfam\Bbbfam
\textfont\Bbbfam=\textmsbm

\date{ }
\begin{document}
\setcounter{page}{0}
\title{Uniform oscillation properties of almost periodic functions.}

\maketitle

\medskip\author{ \center  Alain HARAUX \\Sorbonne Université, Université Paris Cité, \\ CNRS, Laboratoire Jacques-Louis Lions, \\LJLL, F-75005 Paris, France.\\ email: alain.haraux@sorbonne-universite.fr
 \\}

\bigskip

\renewcommand{\abstractname} {\bf Abstract}
\begin{abstract} A synthetic view is provided on scattered results which were never presented in this form in the literature. First we  investigate precisely the oscillation length for periodic functions. Then, we establish the existence of a uniform oscillation lentgh for any scalar almost periodic function with mean-value 0. After that, we investigate upper bounds of the oscillation length for some almost periodic functions which appear in the study of continuum mechanics, and we show the existence of a finite oscillation length with respect to time in any open subset of some vibrating continuous media. The paper finally recalls a long standing open problem on the vibrations of a square membrane with fixed edge.

\end{abstract}

\bigskip

\qquad {\small \bf AMS classification numbers:}  34C10, 35B05, 35B15, 42A55, 42A75, 43A99
 
\bigskip
\qquad {\small \bf Keywords:}
oscillation theory, almost periodicity.
\vfill

\newpage

\noindent 
{\Large \bf  Introduction} \bigskip

 In the study of vibrating mechanical systems, an important role is played by almost periodic functions, cf. e.g \cite{Dieud}, section I.3 for a concise presentation. By Bochner's criterion, a function f defined on the real line 
$\R$ with values in a complete metric space $X$ is almost periodic if, and only if the set of translates $f_a= f(a+.)$ is  precompact in the space of bounded continuous functions $C_b(\R, X).$  The closure $H(f)$ of this precompact set in $C_b(\R, X) $ is called the ``hull" of $f$. Any almost periodic function is bounded and uniformly continuous. A continuous periodic function is in particular almost periodic with a compact set of translates. The space of almost periodic functions is stable by many operations, for example a sum of any number of real valued continuous functions with arbitrary periods is almost periodic, while the sum of two non-constant periodic continuous functions with incommensurable periods is never periodic, cf.e.g.\cite{Dieud}, lemma I.3.1.7 p. 1.3.1.7. An important property coming from uniform convergence on $\R$ is that \begin{equation}g\in H(f) \Leftrightarrow f\in H(g).\end{equation} This property implies, among other things, that if $f$ has a limit at $+\infty$, it is actually a constant function. In the case of periodic functions, it is clear that either they are constant, or they oscillate around their mean-value or average. This global property is also important in the case of general almost periodic functions, and for this it is necessary to define their average.  It is known  that when $X$ is a Banach space,  there is a such an average, denoted by $\mu(f)$ such that 
\begin{equation}\label{moy}\lim_{t\rightarrow\infty} \sup_{a\in \R} ||\frac{1}{t} \int_a^{a+t} f(s) ds -\mu(f) ||ds = 0.\end{equation}More generally, when $X$ is a real Hilbert space, any almost periodic function with values in X has a formal representation of the form 
\begin{equation} f(t) \sim \mu(f) + \sum _n[ \cos (\mu_nt) f_n+ \sin (\mu_nt)  g_n] .\end{equation}  where \begin{equation} \label {comp} f_n = 2 \lim_{t\rightarrow\infty}\frac{1}{t}  \int_0^{t} \cos (\mu_ns) f (s)ds, \quad g_n = 2 \lim_{t\rightarrow\infty}\frac{1}{t}  \int_0^{t} \sin (\mu_ns) f (s)ds \end{equation} 
and we have the generalized Parseval identity
 \begin{equation}\label {GP} || \mu(f)||^2 + \frac{1}{2} \sum _n [ || f_n||^2+ || g_n||^2] = \lim_{t\rightarrow\infty}\frac{1}{t}  \int_0^{t}||f(s)||^2 ds .\end{equation}
The general properties of almost periodic functions have been studied in quite a few reference books, for instance by Bochner and  Von Neumann \cite{BVN}, Besicovitch \cite{Bes}, Corduneanu \cite{Cord1}, Amerio and Prouse \cite{AP}, Yoshisawa \cite{Yosh} and Fink \cite{Fink}. 
 
 \bigskip\noindent Oscillations only make sense for real valued functions, and the question can always be reduced to the case of mean-value 0, since in mechanics we usually take the equilibrium as the origin of the frame. The oscillation length, roughly speaking the maximal distance between two successive zeroes,  is a very important question  since its determination  allows to predict at which pace a mechanical structure will be vibrating. 
 
  \bigskip\noindent This paper is intended to provide a synthetic view on scattered results which were never presented in this form in the literature and is  divided in 7 additional sections. Section 1 gives an upper bound on the oscillation length for periodic functions. In section 2, we establish the existence of a uniform oscillation lentgh for any almost periodic function with mean-value 0. Sections 3,4 and 5 are devoted to upper bounds of the oscillation length for some almost periodic functions which appear in the study of continuum mechanics, and Section 6 shows the existence of a finite oscillation length with respect to time in any open subset of some vibrating continuous media. We close the paper by investigating precisely the vibrations of rectangular membranes with fixed edge and recalling a long standing open problem in the case of a square membrane.

\section{The periodic case} A continuous function $f$ defined on $\R$ is called T-periodic if \begin{equation} \forall t\in \R, \quad  f(t+T) = f(t) .\end{equation} The integral of $f$, hence its mean-value, is the same on each interval of length $T$, as a consequence if the mean-value is $0$ the following property is obvious
\begin{proposition} Let $f$ be any real-valued T-periodic function with mean-value 0. Then either $f\equiv 0$, or  on each interval $J$  of length  $ |J|\ge T$,  $f$ takes both positive and negative values. \end{proposition}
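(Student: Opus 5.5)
The argument rests on one observation: a continuous $T$-periodic function has the same integral over every interval of length $T$. I will argue by contraposition. Suppose $f$ is real-valued, continuous, $T$-periodic with mean-value $0$, and that on some interval $J$ with $|J|\ge T$ the function $f$ fails to take both signs. Replacing $f$ by $-f$ if necessary, we may assume $f\ge 0$ on $J$. The goal is then to show $f\equiv 0$.

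First step: since $|J|\ge T$, the interval $J$ contains a closed subinterval $[a,a+T]$. I use the standard fact that the integral of a $T$-periodic function over $[a,a+T]$ does not depend on $a$. To check it, differentiate $a\mapsto \int_a^{a+T} f(s)\,ds$: the derivative is $f(a+T)-f(a)=0$. Hence
\begin{equation*}
\int_a^{a+T} f(s)\,ds=\int_0^T f(s)\,ds = T\mu(f)=0,
\end{equation*}
where the mean-value of a $T$-periodic function is $\frac1T\int_0^T f$. This agrees with the general definition (\ref{moy}), because averages over long intervals are dominated by the full periods they contain.

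Second step: on $[a,a+T]$ the function $f$ is continuous and nonnegative, and its integral there is $0$. Therefore $f\equiv 0$ on $[a,a+T]$. Indeed, if $f(t_0)>0$ at some point, continuity gives a neighborhood on which $f>f(t_0)/2$, and the integral would then be positive.

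Third step: every real $t$ can be written as $t=s+kT$ with $s\in[a,a+T]$ and $k\in\mathbb{Z}$. By periodicity $f(t)=f(s)=0$, so $f\equiv 0$ on $\R$. This is exactly the dichotomy in the statement: if $f\not\equiv 0$, then $f$ takes both positive and negative values on every interval of length at least $T$.

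There is no real obstacle here. The only points needing care are that continuity is used to pass from a zero integral to $f=0$, and that the mean-value in the sense of (\ref{moy}) coincides with the average over one period.
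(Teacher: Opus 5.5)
Your proof is correct and is the paper's argument (a continuous function of constant sign with zero integral over a full period must vanish), written out more carefully. The paper simply asserts $\int_J f=0$, which is literally true only when $|J|$ is a multiple of $T$, whereas you pass to a subinterval of length exactly $T$ and then propagate $f=0$ to all of $\R$ by periodicity. One small point: an open $J$ of length exactly $T$ contains no closed interval $[a,a+T]$, but by continuity $f\ge 0$ on $J$ gives $f\ge 0$ on its closure, so your argument still goes through.
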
 
\begin{proof} By contradiction, let us assume that $f$ has a constant sign on $J$. Since $\int_J f(t) dt =0 $, this implies $f\equiv 0$. \end{proof} 
\noindent  A function $f$ defined on $\R$ is called $\tau$-antiperiodic periodic if \begin{equation} \forall t\in \R, \quad  f(t+\tau) = - f(t) .\end{equation} It is obvious that, in this case,  f is $2\tau$ periodic with  mean-value  $0$.  The following result then  implies the existence of zeroes on any interval larger than the antiperiod.
\begin{proposition} Let $f$ be any real-valued $\tau$-antiperiodic function. Then  $f$ cannot keep a constant sign on any interval $J$  of length  $ |J|> \tau$,  \end{proposition}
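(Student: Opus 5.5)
Notice first that the statement needs a precise reading. For $f\equiv 0$, ``constant sign'' must mean strictly positive or strictly negative; alternatively, a sign which is not identically zero. I will prove the natural version. Let $f$ be continuous and $\tau$-antiperiodic, and let $J$ be an interval with $|J|>\tau$. Then $f$ cannot satisfy $f>0$ on $J$, nor $f<0$ on $J$. More strongly, $f$ vanishes somewhere on $J$. If $f$ is not identically $0$ on $J$, it even takes values of both signs there, or at least it cannot be $\ge 0$ and nonzero there. The whole argument is a direct use of the relation $f(t+\tau)=-f(t)$, with no averaging needed, in contrast with the previous Proposition.

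The key step goes as follows. Since $|J|>\tau$, we can pick a point $t\in J$ such that $t+\tau\in J$ as well. In fact the set of such $t$ is a nondegenerate subinterval of $J$. Now assume $f>0$ on $J$. Then $f(t)>0$ and $f(t+\tau)>0$. But $f(t+\tau)=-f(t)<0$, a contradiction. The case $f<0$ is symmetric.

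For the stronger version, assume $f\ge 0$ on $J$. Then $f(t)\ge 0$ and $-f(t)=f(t+\tau)\ge 0$, so $f(t)=0$ for every $t$ in the subinterval $J'=\{t\in J:\ t+\tau\in J\}$. Applying the same reasoning to the points $t+\tau$ gives $f=0$ on $J'+\tau$ too. Write $J=[a,b]$, allowing open or closed endpoints. Then $J'=[a,b-\tau]$ and $J'+\tau=[a+\tau,b]$. These two intervals cover $J$ exactly because $b-\tau\ge a+\tau$ fails in general, so I must check the covering. They overlap or touch precisely when $b-\tau\ge a+\tau$, that is when $|J|\ge 2\tau$. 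So this stronger claim, $f\equiv0$ on $J$, only follows for longer intervals. For $\tau<|J|<2\tau$, the honest conclusion is that $f$ vanishes on the nondegenerate interval $J'$, which is impossible under a strict sign. By continuity, the only way to keep a constant sign in the weak sense is to have zeros.

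The only delicate point is therefore the interpretation of ``constant sign''. I expect the intended meaning is strict sign, namely no sign change and no zeros. In that case the one-line argument in the key step suffices. In the write-up I will state it in that form, and add the remark that $f$ must vanish on a subinterval of $J$ of length $|J|-\tau$ whenever $f\ge0$ or $f\le0$ on $J$.
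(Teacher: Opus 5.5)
Your key step is the paper's argument: the paper takes $J=[a,a+\tau+\varepsilon]$ and notes that antiperiodicity turns the sign of $f$ on $[a+\tau,a+\tau+\varepsilon]$ into the opposite sign on $[a,a+\varepsilon]\subset J$, which is exactly your choice of $t$ with $t,t+\tau\in J$. Your strict-sign reading is the intended one, as the Remark right after the proposition confirms (it gives an antiperiodic $f\ge 0$, $f\not\equiv 0$, on $[0,2\tau-\varepsilon]$); that same example shows your first-paragraph aside, that $f\ge 0$ on $J$ and not identically zero there is impossible, is false for $\tau<|J|<2\tau$, so drop it and keep only your correct closing remark that $f$ must vanish on a subinterval of $J$ of length $|J|-\tau$.
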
 
\begin{proof} By contradiction, let us assume that $f$ has a constant sign on $J = [a, a+ \tau + \varepsilon]$. By anti-periodicity, $f$ must take the opposite sign on $J = [a, a+ \varepsilon]$, which is impossible.  \end{proof} 
\begin{remark} On the other hand, $f$ can remain non-negative on any interval of length smaller than the period $T = 2\tau$. For instance we can have  $f =0$ on $[0, \tau-\varepsilon]$,  $f>0$ on $[\tau-\varepsilon, \tau]$ and $ f = 0$ on $[\tau,T-\varepsilon], $ in which case the anti-periodic function $f$ is nonnegative throughout $[0,T-\varepsilon].$
\end{remark}
\begin{remark} It may very well happen that the oscillation length of a T-periodic function be smaller than $T$. For instance the function $f(t) = \sin^2 t \sin3t $ is periodic with period $2\pi$ and its oscillation length is $\frac{\pi}{3}$.
\end{remark}

\section{A general theorem} \begin{thm} Let $f$ be any real-valued almost periodic function with mean-value 0. Then either $f\equiv 0$, or there exists an number $L = L(f)$ such that on each interval $J$  of length  $ |J|\ge L$,  $f$ takes both positive and negative values. \end{thm}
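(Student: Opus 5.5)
We argue by contradiction, using the compactness of the hull $H(f)$ in $C_b(\R,\R)$ and the uniformity of the mean in (\ref{moy}). Suppose $f\not\equiv 0$ and no such $L$ exists. Then for every $n\in\N$ there is an interval $J_n=[a_n,a_n+n]$ on which $f$ keeps a constant sign. Passing to a subsequence, we may assume this sign is the same for all $n$; replacing $f$ by $-f$ if necessary, we take $f\ge 0$ on every $J_n$. Consider the translates $f_{a_n}=f(a_n+\cdot)$. By Bochner's criterion there is a further subsequence converging uniformly on $\R$ to some $g\in H(f)$. For each fixed $t\ge 0$ we have $f_{a_n}(t)\ge 0$ as soon as $n\ge t$, so in the limit $g\ge 0$ on $[0,\infty)$.

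Next we show that $g$ is almost periodic with mean value $0$. Almost periodicity holds because $H(g)\subset H(f)$ is precompact. For the mean value, note that $\frac1t\int_a^{a+t} g(s)\,ds$ is the uniform limit in $a$ of $\frac1t\int_{a+a_n}^{a+a_n+t} f(s)\,ds$. By (\ref{moy}), the supremum over $a$ of the latter tends to $0$ as $t\to\infty$, uniformly in $n$, so $\mu(g)=\mu(f)=0$. Hence $g\ge 0$ on $[0,\infty)$ and $\frac1t\int_0^t g\,ds\to 0$.

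The main step is to conclude that $g\equiv 0$ on $[0,\infty)$, and then on all of $\R$. We take $s_0\ge 0$ with $g(s_0)=\delta>0$ and use almost periodicity of $g$ itself: the set of $\delta/2$-almost periods $\tau$ of $g$ is relatively dense with some inclusion length $\ell$. By uniform continuity, $g\ge \delta/4$ on $[s_0,s_0+\eta]$ for some $\eta>0$. Then $g\ge \delta/4 - \delta/2$ is not enough, so we instead use $\delta/8$-almost periods, which give $g\ge \delta/8$ on $[s_0+\tau,s_0+\tau+\eta]$ for every such $\tau$. Each interval of length $\ell$ in $[0,\infty)$ contains such a $\tau$. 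Since $g\ge 0$ on $[0,\infty)$, this yields $\int_0^t g\ge c\,t$ for large $t$, with $c>0$, contradicting $\mu(g)=0$. So $g\equiv 0$ on $[0,\infty)$.

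To extend to $\R$, we use that the sup norm of a translate is the same everywhere: $g$ vanishes on $[0,\infty)$, and an almost periodic function vanishing on a half-line vanishes identically. To see this, take $\varepsilon$-almost periods $\tau_k\to+\infty$; then for any $t$, $|g(t)|\le |g(t+\tau_k)|+\varepsilon=\varepsilon$. Thus $g\equiv 0$.

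Finally, by the symmetry property $g\in H(f)\Leftrightarrow f\in H(g)$ recalled in the introduction, $f$ is a uniform limit of translates of $g\equiv 0$, so $f\equiv 0$, a contradiction. The delicate point is the passage from ``nonnegative on a half-line with zero mean'' to vanishing, which is where the relative density of almost periods is essential.
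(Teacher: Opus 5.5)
Your proof is correct and follows the paper's route. You extract a limit $g\in H(f)$ that is nonnegative on $\R^+$, show via relative density of almost periods that such a $g$ with zero mean must vanish (this is the paper's lemma), and conclude from $f\in H(g)$. You also verify two points the paper leaves implicit: that $\mu(g)=\mu(f)=0$, and that vanishing on $\R^+$ forces $g\equiv 0$ on $\R$.

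Two small clean-ups in the main step. Take $\ell$ to be the inclusion length of the $\delta/8$-almost periods, not the $\delta/2$ ones. And space the chosen $\tau$'s, for instance one in every other interval of length $\ell\ge\eta$, so that the intervals $[s_0+\tau,s_0+\tau+\eta]$ are disjoint and the bound $\int_0^t g\ge ct$ becomes explicit.
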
 

The proof of the theorem relies on the following simple property.

\begin{lemma} Let $g$ be any real-valued almost periodic function with mean-value 0. If $g$ is non-negative throughout $\R^+$,  then  $g\equiv 0$. \end{lemma}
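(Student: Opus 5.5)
The plan is to combine the uniform mean-value property \eqref{moy} with the fact that an almost periodic function comes back uniformly close to any of its past values. We argue by contradiction: assume $g\ge 0$ on $\R^+$, $\mu(g)=0$, but $g(t_0)\neq 0$ for some $t_0\in\R$. If $t_0\ge 0$ then necessarily $g(t_0)=\delta>0$. If $t_0<0$ we cannot use the sign yet, so we first transfer the information to $\R^+$. Set $\delta=|g(t_0)|>0$. By uniform continuity of $g$ there is $\eta>0$ such that $|g(t)-g(t_0)|<\delta/4$ whenever $|t-t_0|\le\eta$.

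Next we use almost periodicity in Bohr's form, which follows from Bochner's criterion recalled in the introduction: the set of $\varepsilon$-almost periods $\tau$, defined by $\sup_{t}|g(t+\tau)-g(t)|\le\varepsilon$, is relatively dense. With $\varepsilon=\delta/4$ it contains arbitrarily large values, so we pick $\tau>|t_0|+\eta$. Then on the interval $I=[t_0+\tau-\eta,\,t_0+\tau+\eta]\subset\R^+$ we have $|g(t)-g(t_0)|<\delta/2$. Since $g\ge 0$ on $\R^+$, this forces $g(t_0)>0$; otherwise $g<-\delta/2<0$ on $I$, a contradiction. So in all cases $g(t_0)=\delta>0$, and $g>\delta/2$ on $I$.

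Now we repeat this with a relatively dense family of almost periods. Let $\ell$ be the inclusion length for $\delta/4$-almost periods. Every interval of length $\ell$ contains some $\tau$, so every interval of length $\ell+2\eta$ (shifted by $t_0$) contains a subinterval of length $2\eta$ on which $g>\delta/2$. Since $g\ge0$ on $\R^+$, for $a\ge 0$ large and $t=k(\ell+2\eta)$ we get
\[
\frac1t\int_a^{a+t} g(s)\,ds\ \ge\ \frac{k\cdot 2\eta\cdot\delta/2}{k(\ell+2\eta)}=\frac{\eta\delta}{\ell+2\eta}>0 .
\]
As $t\to\infty$ this lower bound stays fixed, which contradicts \eqref{moy} with $\mu(g)=0$. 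Hence $g\equiv0$.

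The only step that needs care is justifying relative density of almost periods from the Bochner definition used in the paper. Suppose it fails. Then there are intervals $[a_n, a_n+n]$ containing no $\varepsilon$-almost period. By precompactness, a subsequence of the translates $g_{a_n+n/2}$ is Cauchy, so $\|g_{a_n+n/2}-g_{a_m+m/2}\|_\infty\le\varepsilon$ for some $m\neq n$. This makes $a_n-a_m+(n-m)/2$ an $\varepsilon$-almost period, and it is easy to arrange (taking $n\gg m$) that it lands in the forbidden interval. This is the standard Bohr–Bochner equivalence, and it can also simply be quoted from \cite{Dieud} or \cite{AP}.
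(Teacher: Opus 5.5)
Your proof is correct and follows essentially the same route as the paper: you use continuity at a point where $g>0$, then relative density of $\varepsilon$-almost periods to reproduce a positive bump of fixed width in every window of fixed length, and finally nonnegativity on $\R^+$ to keep the averages bounded away from $0$, which contradicts $\mu(g)=0$. Your transfer step for $t_0<0$ and your derivation of relative density from Bochner's criterion make explicit two points the paper settles in one line each (``if $g$ vanishes identically on $\R^+$, then $g\equiv 0$'' and ``by almost periodicity'').
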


\begin{proof}If $g$ vanishes identically on $\R^+$, then $g\equiv 0$. By contradiction, let us assume that $g(a)>0 $  for some $a\in \R^+$. Then, by continuity there is $\delta>0$ for which 

$$\forall t\in [a, a+\delta], \quad g(t)\ge \frac{g(a)}{2} =: 2 \varepsilon $$ By almost periodicity, there exists a number $A>\delta >0$ such that for each $b\in \R$, the interval $J_b= [b, A+b]$ contains a point $c(b)$ such that $$ \sup_\R |g(s+c(b))-g(s)| \le \varepsilon. $$ Let us choose $b_n= n A, n\in \N$  and $ d_n := c(b_n) $, this provides
$$\forall t\in [a+d_n, a+ d_n+\delta], \quad g(t)\ge  \varepsilon $$  In particular we obtain 
$$\forall n\in\N, \quad \int _{a+nA} ^{a+nA+2A} g(t) dt\ge \int _{a+d_n} ^{a+d_n+\delta} g(t) dt  \ge  \varepsilon \delta $$ and finally 
\begin{equation}\label{>}\forall p\in\N, \quad \int _{a} ^{a+2pA} g(t) dt \ge  p \varepsilon \delta , \end{equation}
But \eqref{>} is incompatible with the mean value 0 which implies  \begin{equation}\lim_{p\rightarrow\infty} \frac{1}{2pA}\int _{a} ^{a+2pA} g(t) dt = 0.\end{equation}
\end{proof}

We now give the proof of the theorem

\begin{proof} By contradiction, let us assume that $f$ has a constant sign on some intervals  $J_n = [a_n, a_n+L_n] $  with $$ \lim_{n\rightarrow\infty } L_n = +\infty$$ 
By considering a subsequence, we may assume that the sign is the same for all n. By precompactness, a subsequence of the sequence of functions $g_n:= f(a_n+.)$ converges uniformly on $\R$ to a function $g\in H(f)$  which has a constant sign on $\R^+$. Then the lemma applied to $\pm g$ gives $g= 0$, and then $f\in H(g)$ implies that $f=0$. \end{proof}

\section{Gaps and oscillation} When dealing with numerical almost periodic functions, the first case to be considered is a sum of periodic functions with mean-value 0. In \cite{HK} it was shown that a finite sum of such periodic functions has an oscillation length bounded by the sum of the periods. This result can be used to find a uniform oscillation time for an infinite sum when the sum of the periods is finite. Unfortunately, it cannot be used even in the case of vibrating strings for which a very different method had to be devised in \cite{CH1, CH2}. Yet another method can be applied for anharmonic Fourier series when a uniform gap separates the exponents. This is the object of the next result, relying on a technique borrowed from Ingham \cite{Ing} and improving slightly a previous one which only appeared in an old preprint of the author dating back to 1988 and inside a chapter of the lecture note \cite{H2}  . 

\begin{thm} \label{gaposc} Let $f\not \equiv 0$ be a real-valued almost periodic function with mean-value 0 defined as the limit of the uniformly convergent series
\begin{equation} f(t) =  \sum _n[ a_n \cos (\mu_nt) + b_n \sin (\mu_nt) ]. \end{equation}  We assume that for some $ \gamma >0$ we have 
\begin{equation}\mu_0\ge \gamma ; \quad \forall n\in\N,\quad \mu_{n+1}-\mu_n\ge \gamma. \end{equation} Then on each interval $J$  of length  $ |J|\ge L: =\frac{3\pi}{\gamma} $,  $f$ takes both positive and negative values. \end{thm}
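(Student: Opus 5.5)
Suppose, for contradiction, that $f\ge 0$ on an interval $J$ of length $L=\frac{3\pi}{\gamma}$; the case $f\le 0$ follows by replacing $f$ by $-f$. Translating time does not change the exponents $\mu_n$, so we may take $J=[-T,T]$ with $T=\frac{3\pi}{2\gamma}$. Following Ingham, we test $f$ against a nonnegative kernel $k$ supported in $[-T,T]$ whose Fourier transform $\hat k(\xi)=\int k(t)e^{-i\xi t}\,dt$ is nonpositive for $|\xi|\ge\gamma$. Then
$$I:=\int_{-T}^{T} k(t)f(t)\,dt=\sum_n a_n\,\mathrm{Re}\,\hat k(\mu_n)+b_n\,(\ldots).$$
Termwise integration is legitimate because the series converges uniformly. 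We take $k$ even, so that $\hat k$ is real and the sine terms vanish; this gives $I=\sum_n a_n\hat k(\mu_n)$.

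The next step is to control the signs of the coefficients. This is the main obstacle: the $a_n$ have no sign, so the sign condition on $\hat k$ alone does not finish the argument. Ingham's trick is to use the nonnegativity of $f$ on $J$ itself. For every shift $s$ with $[s-T',s+T']\subset J$, and for the auxiliary kernels $k(t)(1\pm\cos(\mu_m(t-s)))$, which are still nonnegative, we get
$$0\le \int k(t-s)\bigl(1\pm\cos\mu_m(t-s)\bigr)f(t)\,dt .$$
This is the standard way of extracting individual coefficients, as in the proof of Ingham's inequality.

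A cleaner route is the following. Take $k(t)=\cos(\gamma t/3)\,\mathbf 1_{|t|\le T}$, which is nonnegative on $[-T,T]$ since $\gamma T/3=\pi/2$. Its transform is
$$\hat k(\xi)=\frac{c\,\cos(\xi T)}{(\gamma/3)^2-\xi^2}\quad (c>0).$$
Then use the translates $k(\cdot-s)$ for $s$ in a subinterval and average in $s$ against $e^{i\mu_m s}$. Because $f\ge0$ on $J$, the function $F(s)=\int k(t-s)f(t)\,dt$, defined on a shorter range, is nonnegative. Moreover $F$ is the almost periodic series $\sum_n \hat k(\mu_n)\bigl(a_n\cos\mu_n s+b_n\sin\mu_n s\bigr)$ with mean value $0$. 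The plan is to show that $F\ge 0$ on all of $\mathbb R$, or on an interval long enough that a second application of the argument applies. This would yield $F\equiv 0$ by the Lemma of Section 2, hence $\hat k(\mu_n)(a_n,b_n)=0$ for all $n$.

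The weak point is that $\hat k$ vanishes at the points $\xi T\in\pi/2+\pi\mathbb Z$, so some $\mu_n$ could be killed. To handle this, the constant $3\pi/\gamma$ should be used to move a small parameter in $T$: there is room to choose $T'\in(T-\epsilon,T]$ such that $\cos(\mu_nT')\ne0$ for all $n$, since the bad values of $T'$ form a countable set. This leaves the positivity transfer as the genuine difficulty. I expect the actual proof to replace it with a direct quantitative estimate of the form
$$\int k f\ \ge\ c\sum |a_n|\ \ \text{vs}\ \ \le\ \ldots$$
with the gap $\gamma$ and the length $3\pi/\gamma$ entering through $|\hat k(\mu_n)|\lesssim (\mu_n^2-\gamma^2/9)^{-1}$, summed using the separation $\mu_n\ge (n+1)\gamma$. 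I would pursue that estimate first.
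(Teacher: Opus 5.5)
There is a genuine gap: the proposal never reaches a contradiction, and the route you call cleaner cannot work. Since $|J|=2T$ is exactly the length of the support of $k$, the function $F(s)=\int k(t-s)f(t)\,dt$ is known to be nonnegative only at $s=0$. With a shorter kernel it would be nonnegative only on a shorter interval. Moreover, $F$ has the same exponents $\mu_n$ as $f$, so proving $F\ge 0$ on $\R^+$ is the theorem itself in a stronger form, and you give no mechanism for it.

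Several other pieces also do not help:
\begin{itemize}
\item Perturbing $T$ to avoid the zeros of $\hat k$ addresses a non-issue.
\item Your kernel does not have the sign property $\hat k\le 0$ on $|\xi|\ge\gamma$ that you first ask for, and none is needed.
\item The closing estimate $\int kf\ge c\sum|a_n|$ is not available: Parseval only gives $(a_n)\in\ell^2$, and nothing bounds $\int kf$ below by an $\ell^1$ norm.
\item An even kernel only sees the cosine coefficients, so the case where all $a_n$ vanish needs a separate argument, which you do not give.
\end{itemize}

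The missing idea is already in your second paragraph, which you dropped: take $s=0$ and a single $m$, chosen extremal. Rescale to $J=[-\pi,\pi]$, $\gamma=\frac32$, so that your $k$ becomes $\cos(t/2)$. Since $a_n\to 0$, choose $j$ with $|a_j|=\max_n|a_n|$, normalize $|a_j|=1$, and integrate $f$ against the nonnegative function $\cos(t/2)\bigl(1-\operatorname{sgn}(a_j)\cos\mu_j t\bigr)$. The computation uses Ingham's formula $\int_{-\pi}^{\pi}\cos(t/2)\cos(\lambda t)\,dt=\frac{4\cos\pi\lambda}{1-4\lambda^2}$.
\begin{itemize}
\item The diagonal term contributes $\int_J\cos(t/2)\cos^2(\mu_j t)\,dt\ge 2\cdot\frac{34}{35}$.
\item All remaining terms are bounded by $4S+2\bigl(3S-\frac18-\frac1{35}\bigr)$, where $S=\sum_{p\ge1}\frac{1}{9p^2-1}=\frac12-\frac{\pi}{6\sqrt3}<\frac15$.
\item This bound uses only $|a_n|\le 1$, $\mu_n\ge(n+1)\gamma$, $|\mu_n-\mu_j|\ge|n-j|\gamma$, and $\mu_n+\mu_j\ge(n+j+2)\gamma$, which is at least $3\gamma$ for $n\ne j$.
\end{itemize}
These two bounds are incompatible, which gives the contradiction. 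Only the decay $|\hat k(\xi)|\le\frac{4}{4\xi^2-1}$ enters, so the zeros of $\hat k$ play no role. If all $a_n$ vanish, $f$ is odd, so $f\ge 0$ on $[-\pi,\pi]$ forces $f=0$ there. Ingham's uniqueness theorem, which applies since $2\pi>\frac{2\pi}{\gamma}$, then gives $f\equiv 0$.
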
 
\begin{proof} By performing an adequate time translation followed by a time scaling, it is sufficient to prove the result when $J= [-\pi,+\pi]$  and $ \gamma = \frac{3}{2 }$,  but to allow a possible future improvement, until the last step of the proof  we shall keep $\gamma\ge 1 $ arbitrary.  There no chance of concluding if $\gamma<1$, since the oscillation property would become false  even in the $2\pi$-periodic case. In the calculations, we shall rely repeatedly on the following elementary formula used by Ingham in \cite{Ing}:
\begin{equation} \label{Ing} \forall \lambda\not=\frac{1}{2} , \quad \int_J \cos(\frac{t}{2})\cos(\lambda t) dt = \frac {4\cos(\pi\lambda)}{1-4\lambda^2}\end{equation}
We proceed now by contradiction by assuming $f\ge 0$ on $J$. Because $f\not \equiv 0$, if all the $a_n$ are 0, the result follows because then $f$ is odd, it will either change sign on $J $ or be identically $0$  on $J$,  and by Ingham's Theorem, if $f \equiv 0$ on J, it implies $f \equiv 0$  on $\R$. Moreover, since the generalized coefficients $a_n$ tend to $0$ at infinity as a consequence of \eqref{GP}, we can introduce an index $j$ such that $|a_j|$ is equal to the maximum of $|a_n|. $ In fact, since positivity is preserved by a positive scaling, we can reduce the situation to the case where 
$$ |a_j| =1;\quad \forall n\in \N, |a_n|\le 1$$ where.  Now since $J= [-\pi,+\pi]$ and the sine function is odd, the condition $f\ge 0$ on $J$ implies 
$$  \int_J f(t) \cos(\frac{t}{2})(1\pm\cos(\mu_j t) dt \ge 0.$$ By adapting the $\pm$ sign (negative if $a_j =1$ and positive if $a_j=-1$), we deduce by expansion of the cosine terms in $f$  the inequality 
\begin{equation} \int_J  \cos(\frac{t}{2})\cos^2(\mu_j t) dt \le \sum_n  a_n \int_J \cos(\frac{t}{2})\cos(\mu_nt) dt \pm \sum_{n\not=j}  a_n \int_J \cos(\frac{t}{2})\cos(\mu_nt) \cos(\mu_jt) dt\end{equation} We shall compute all the terms by means of formula \eqref{Ing}. The RHS is  $$ \int_J  \cos(\frac{t}{2})\cos^2(\mu_j t) dt = \frac{1}{2}  \int_J  \cos(\frac{t}{2})[1+\cos(2\mu_j t)] dt = 2 \left[1+ \frac{\cos(2\pi\mu_j)}{1-16\mu_j^2}\right]$$ It follows easily, since $\mu_j\ge\gamma$, that 
$$ \int_J  \cos(\frac{t}{2})\cos^2(\mu_j t) dt \ge 2\left[1- \frac{1}{16\gamma^2-1}\right] = 2\left(\frac{16\gamma^2-2}{16\gamma^2-1}\right)$$ Next we find 
$$ \sum_n  a_n \int_J \cos(\frac{t}{2})\cos(\mu_nt) dt \le \sum_n  \frac{4}{4(n+1)^2\gamma^2-1} $$ The function $$ S(\gamma) =  \sum_{p=1}^\infty   \frac{1}{4p^2\gamma^2-1}$$ can be computed and $$S(\gamma) = \frac{1}{2}- \frac{\pi}{4\gamma} \cot \frac{\pi}{2\gamma} $$  To appraise the last infinite sum, we use the linearization formula 
$$ \cos\mu_nt \cos\mu_j t= \frac{1}{2}\left[\cos(\mu_n+\mu_j) t+ \cos(\mu_n-\mu_j) t \right] $$ and we find 
$$\int_J \cos(\frac{t}{2})\cos(\mu_n t) \cos(\mu_j t) dt = 2\left(\frac{\cos\pi(\mu_n+\mu_j)}{4(\mu_n+\mu_j)^2-1} + \frac{\cos\pi(\mu_n-\mu_j)}{4(\mu_n-\mu_j)^2-1}\right)$$ therefore, after observing that  $ \mu_n+\mu_j\ge 3\gamma$ whenever $n\not=j$, we obtain 
$$ \pm \sum_{n\not=j}  a_n \int_J \cos(\frac{t}{2})\cos(\mu_nt) \cos(\mu_jt) dt \le 2 \left(1+\delta)(3S(\gamma)-\frac{1}{4\gamma^2 -1}- \frac{1}{16\gamma^2 -1} \right) . $$ By combining all the terms we find the inequality $$ \frac{16\gamma^2-2}{16\gamma^2-1} \le 5 S(\gamma) - \frac{1}{4\gamma^2 -1}- \frac{1}{16\gamma^2 -1} $$ For $\gamma =\frac{3}{2}$ we obtain $$ \frac{34}{35} \le 5 S(\tfrac{3}{2})-\frac{1}{8} -\frac{1}{80} \Longleftrightarrow \frac{621}{560}\le 5 S(\tfrac{3}{2}) $$ which is false since $$S(\tfrac{3}{2}) = \frac{1}{2} -\frac{\pi}{6} \cot \frac{\pi}{3} = \frac{1}{2} -\frac{\pi}{6\sqrt 3}  < \frac{1}{5}$$ and this concludes the proof. 
 \end{proof} 

\begin{remark} This result improves Proposition 3  of {\rm \cite{H2}}, reducing the condition on $|J|$ by a factor $\frac{3}{4}$, but it is not optimal and we still do not know if the weaker condition  $ |J| \ge \frac {2\pi}{\gamma} $ is sufficient. By comparing with the situation described in {\rm\cite{Ing}}, one would expect that the limiting case $ |J| = \frac {2\pi}{\gamma} $ might be excluded, which is problematic for the study of vibrating strings considered in the next section. \end{remark}

\section{Vibrating strings} Let us consider the simple equation 

\begin{equation} \label{string} \begin{cases}
u_{tt}-u_{xx} + a^2 u = 0 \quad  \rm{on}\,\,  \R\times(0, l)   \\
u(t, 0) = u(t, l) = 0 \quad  \rm{on}\,\,  \R 
\end{cases}\end{equation} where $l>0, a\ge 0$. 
In this case, for any $(u_0, v_0) \in H_0^1(0, l) \times L^2(0, l) $, there is a unique solution $ u\in C(\R, H_0^1(0, l))\cap C^1(\R, L^2(0, l))\cap C^2(\R, H^{-1} (0, l))$ of \eqref{string} such that $u(0) = u_0\,\,  \rm{and}\,\, u_t(0)= v_0 $, given by the formula 
\begin{equation} u(t, x) =  \sum _n[ a_n \cos (\mu_nt) + b_n \sin (\mu_nt) ] \sin (\tfrac{\pi}{l} nx).\end{equation} with 
$$ u_0(x) = \sum _n  a_n \sin (\tfrac{\pi}{l} nx) ; \quad v_0(x) =  \sum _n \mu_n  b_n  \sin (\tfrac{\pi}{l} nx) $$ and $$ \mu_n := \sqrt{(\tfrac{\pi}{l})^2n^2+a^2}$$ The application of Theorem \ref{gaposc} shows that whenever $\frac{x}{l}$ is irrational and $u\not\equiv 0$, the function $t\longrightarrow u(t, x)$ takes both positive and negative values on the real interval $J$ as soon as $$ |J|\ge l \left(\sqrt{1+ a^2(\tfrac{l}{\pi})^2}  + \sqrt{4+ a^2(\tfrac{l}{\pi})^2}\right) $$ When $a=0$ we find $$ |J|\ge 3l $$ when the correct condition is $ |J|\ge 2l $. Moreover in \cite{CH1, CH2, OscPort} it was shown that the correct value is also $\le 2l$ even for $a>0$.  It would be nice to improve the method of Theorem \ref{gaposc} in order to recover at least the result of \cite{CH1, CH2, OscPort} under the strict condition $ |J|> 2l$.

\section{Vibrating membranes} The oscillation problem for vibrating membranes driven by the equation \begin{equation} \label{memb} \begin{cases}
u_{tt}-\Delta u= 0 \quad  \rm{on}\,\,  \R\times\Omega   \\
u(t, x, y) = 0  \quad  \rm{on}\,\,  \R \times\partial \Omega
\end{cases}\end{equation} where $\Omega$ is an open subset of $\R^2$ is much more complex, even for simple domains as a square. As was established in \cite {HK}, the oscillation time of the function   $t\longrightarrow u(t, x, y)$  with $(x, y)$ fixed can be arbitrarily long even for analytic solutions with a finite number of harmonics. A general method relying on the positivity of the first eigenfunction of the Laplacian, associated with the eigenvalue $\lambda_1>0$, shows that the solution cannot remain positive on $J\times \Omega$ as soon as 
$|J| > \frac{\pi}{\sqrt{\lambda_1}}$, and this result is optimal for initial data of the form $u(0) = \phi; u_t(0) = 0$ with  $\phi$ an eigenfunction relative to $\lambda_1.$ So it is natural to ask about oscillations in an open subset $\omega\subset \Omega. $ If $\Omega$ is a rectangle, It turns out that the oscillation time of all solutions inside $\omega$ has a universal bound as long as $\omega$ contains the intersection of strip parallel to one side with $\Omega$, and the bound does not depend on the thickness of the strip. We give the details in Section 7. But for all other subsets $\omega$, the existence of such a bound is unknown. At least the next result shows that no solution can remain of a fixed sign on $\R^+\times \omega$. 

\section{A general property}
In this section, we consider the general second order evolution equation 
\begin{equation} \label{WE}  u''+ Au(t) = 0 \end{equation}  where A is a coercive positive self-adjoint operator with compact inverse on the  real Hilbert space $H = L^2(\Omega)$ and $V = D(A^{1/2})$ with the usual norm, $\Omega $ being an open bounded domain of $\R^N$. It is well known that all mild solutions of the equation are almost periodic : $\R \rightarrow V$. Given a non empty open subset $\omega\subset\Omega$, we have the following general result: 
\begin{thm} \label{gen} Let $u\not \equiv 0$ be any mild solution  of equation \eqref{WE}.  Then, assuming that the eigenfunctions of $A$ are analytic in  $\Omega$, there is $L>0$ such that $u$ takes both positive and negative values in the cylinder $J\times \omega$ whenever $|J|\ge L$. \end{thm}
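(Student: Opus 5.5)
The plan is to mimic the proof of the general theorem of Section 2, replacing pointwise values by a space-averaged quantity on $\omega$. Fix a nonnegative function $\varphi\in C_c^\infty(\omega)$, $\varphi\not\equiv 0$, and consider the scalar function
$$ g(t) = \int_\omega u(t,x)\varphi(x)\,dx = (u(t),\varphi)_H .$$
Since $u$ is almost periodic with values in $V\subset H$, $g$ is a real-valued almost periodic function. Expanding $u$ on the orthonormal basis of eigenfunctions $e_k$ of $A$ (eigenvalues $\lambda_k\ge\lambda_1>0$), we get $u(t)=\sum_k [\alpha_k\cos(\sqrt{\lambda_k}t)+\beta_k\sin(\sqrt{\lambda_k}t)]e_k$. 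Hence $g(t)=\sum_k [\alpha_k\cos(\sqrt{\lambda_k}t)+\beta_k\sin(\sqrt{\lambda_k}t)](e_k,\varphi)$. All frequencies are $\ge\sqrt{\lambda_1}>0$, so the mean value of $g$ vanishes (each term has mean $0$ and the series converges uniformly, or directly via \eqref{comp}). If $u$ kept a constant sign, say $u\ge 0$, on $J\times\omega$, then $g\ge 0$ on $J$. The general theorem of Section 2 therefore yields $L(\varphi)$ such that $g$ changes sign on every interval of length $\ge L(\varphi)$, unless $g\equiv 0$. So it suffices to find one $\varphi\ge0$ supported in $\omega$ with $g\not\equiv 0$.

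This nondegeneracy is the main obstacle, and the analyticity hypothesis is meant for it. Suppose that $(u(t),\varphi)=0$ for all $t$ and all $\varphi\in C_c^\infty(\omega)$ (nonnegative test functions span $C_c^\infty(\omega)$ by writing $\varphi=\varphi^+$-type decompositions, e.g.\ $\varphi=(\varphi+C\psi)-C\psi$ with $\psi\ge 0$ equal to $1$ on $\mathrm{supp}\,\varphi$). Then $u(t)=0$ on $\omega$ for all $t$. Group the eigenvalues into distinct values $\nu_1<\nu_2<\dots$ and let $P_m u(t)$ denote the projection on the eigenspace of $\nu_m$. Then $u(t)|_\omega=\sum_m[\cos(\sqrt{\nu_m}t)P_m u_0+\sin(\sqrt{\nu_m}t)\nu_m^{-1/2}P_m v_0]|_\omega$. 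Taking Fourier--Bohr coefficients \eqref{comp} of the $H$-valued almost periodic function $t\mapsto u(t)|_\omega\in L^2(\omega)$, with distinct frequencies $\sqrt{\nu_m}$, gives $(P_mu_0)|_\omega=0$ and $(P_mv_0)|_\omega=0$ for every $m$. Each of these is a finite combination of eigenfunctions, hence analytic in $\Omega$. Vanishing on the open set $\omega$ then forces them to vanish on $\Omega$, provided $\Omega$ is connected (as implicitly assumed in ``domain''). Thus $u_0=v_0=0$ and $u\equiv 0$, a contradiction.

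Concretely, the order is: (i) choose $\varphi$ via the nondegeneracy argument above, so that $g\not\equiv0$. The subtle point is justifying the identification of the coefficients, using the uniqueness of the Bohr expansion from \eqref{GP} applied in $L^2(\omega)$. (ii) Check that $g$ is almost periodic with mean $0$. (iii) Apply the theorem of Section 2 to get $L=L(g)$. (iv) Conclude: if $|J|\ge L$, then $g$ takes both signs on $J$, which is impossible if $u$ has a constant sign on $J\times\omega$ since $\varphi\ge0$. Hence $u$ takes both positive and negative values in $J\times\omega$. Note that $L$ depends on $u$ and $\omega$, consistent with the statement.
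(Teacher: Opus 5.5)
Your argument is correct, but it is organized differently from the paper's.

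\textbf{The paper's route.} The paper works with the unweighted average $f(t)=\int_\omega u(t,x)\,dx$. This average can vanish identically even when $u\not\equiv 0$, for instance by a symmetry of $\omega$, so the paper cannot just quote the theorem of Section~2. Instead it reruns the compactness argument on $u$ itself. From intervals $J_n$ with $|J_n|\to\infty$ on which $u\ge 0$, it extracts $v\in H(u)$, a solution that is nonnegative on $\R^+\times\omega$. It applies the Lemma to $\int_\omega v(t,x)\,dx$, and uses the sign of $v$ to upgrade the vanishing of this average to $v=0$ on $\R^+\times\omega$. It then notes that the coefficients \eqref{comp} of $v$, which only involve $t\ge 0$, vanish on $\omega$ and hence on $\Omega$ by analyticity, and concludes with $u\in H(v)$.

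\textbf{Your route.} You apply unique continuation to $u$ directly, before any limit is taken, to produce a nonnegative weight $\varphi\in C_c^\infty(\omega)$ with $(u(t),\varphi)\not\equiv 0$. Your splitting $\varphi=(\varphi+C\psi)-C\psi$ is exactly what is needed to pass from signed to nonnegative test functions. You then invoke the theorem of Section~2 as a black box on the scalar function $g$.

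\textbf{Comparison.} Your approach is more modular. You never need the vector-valued hull, nor the fact that its elements are again solutions whose Bohr coefficients lie in the eigenspaces. You also get the slightly stronger statement that the weighted average $g$ itself changes sign on every interval of length $L(g)$. The paper's route, in exchange, uses the sign condition more strongly, to pass from a zero average to pointwise vanishing. That is what lets it work with the fixed weight $1_\omega$. Both proofs need $\Omega$ connected for the unique continuation step, which you rightly make explicit.
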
 \begin{proof} By contradiction. The function $f(t):= \int_\omega u(t,x) dx$ is real valued and almost periodic with mean-value 0. Assuming that $u$ remains nonnegative on $J_n\times \omega$ with $|J_n|$ tending to infinity, we end up with a solution $v \in H(u)$ which remains nonnegative on $\R^+\times \omega.$ Then $g(t):= \int_\omega v(t,x) dx$ is almost periodic and nonnegative with mean-value 0 , which implies $g\equiv 0 $ on $\R.$ This implies that $v$ vanishes throughout  $\R^+\times \omega$, which in turn, by the analyticity  hypothesis applied to the time Fourier  components of $v$ given by formula \eqref{comp} implies $v\equiv 0$ and finally $u\equiv 0$.   \end{proof} \begin{corollary} Let $u\not \equiv 0$ be any mild solution of the equation \eqref{memb}.  Then, for any nonempty  open subset $\omega\subset \Omega,$ there is $L>0$ such that $u$ takes both positive and negative values in the cylinder $J\times \omega$ whenever $|J|\ge L$. \end{corollary}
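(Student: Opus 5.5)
The corollary will follow by applying Theorem \ref{gen} with $H = L^2(\Omega)$ and $A = -\Delta$ with homogeneous Dirichlet boundary conditions. The plan is to check the hypotheses of Theorem \ref{gen} one at a time and then invoke it. Here $\Omega$ is a bounded open subset of $\R^2$, which is implicit in the setting of Section 5.

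First, I will realize $A$ as the Friedrichs operator associated with the Dirichlet form $a(u,v) = \int_\Omega \nabla u\cdot\nabla v$ on $V = H_0^1(\Omega)$. This gives a positive self-adjoint operator with $D(A^{1/2}) = H_0^1(\Omega)$. Coercivity, $\|\nabla u\|_{L^2}^2 \ge \lambda_1\|u\|_{L^2}^2$ with $\lambda_1>0$, is the Poincaré inequality on the bounded domain $\Omega$; it requires no regularity of $\partial\Omega$.

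Second, I will show that $A^{-1}$ is compact. Indeed, $A^{-1}$ maps $L^2(\Omega)$ continuously into $H_0^1(\Omega)$. The embedding $H_0^1(\Omega)\subset L^2(\Omega)$ is compact by Rellich's theorem, which again holds for arbitrary bounded $\Omega$: extend functions by zero to a ball. The mild solutions of \eqref{memb} with data in $H_0^1\times L^2$ are then exactly the mild solutions of \eqref{WE} with this $A$.

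Third, I will verify the analyticity hypothesis. Each eigenfunction $\phi$ satisfies $-\Delta\phi = \lambda\phi$ in the distributional sense in $\Omega$. Since $-\Delta - \lambda$ is an elliptic operator with constant (hence analytic) coefficients, interior elliptic regularity first gives $\phi\in C^\infty(\Omega)$, and the analytic hypoellipticity theorem then gives real analyticity in $\Omega$. Alternatively, one can argue directly: $e^{\sqrt{\lambda} y}\phi(x)$ is harmonic in $\Omega\times\R$, and harmonic functions are analytic.

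This third step is the only point requiring care, and it is also what the final step of the proof of Theorem \ref{gen} actually uses. The time Fourier components of $v$ given by \eqref{comp} are elements of eigenspaces of $A$. Each eigenspace is finite dimensional because $A^{-1}$ is compact, so every such component is a finite combination of eigenfunctions and hence analytic. If a component vanishes on the open set $\omega$, it therefore vanishes on the connected component of $\Omega$ containing $\omega$.

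If $\Omega$ is connected, this yields $v\equiv 0$, as Theorem \ref{gen} requires. If $\Omega$ is not connected, I would note that the problem decouples over the connected components of $\Omega$. I would then either assume connectedness, as "domain" implicitly does, or restrict attention to the component meeting $\omega$ and treat the solution there. With all hypotheses verified, Theorem \ref{gen} yields the desired $L>0$.
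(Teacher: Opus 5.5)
Your proposal is correct and matches the paper's implicit argument. The paper gives no separate proof: the corollary is just Theorem \ref{gen} applied to the Dirichlet Laplacian on a bounded domain, whose eigenfunctions are real analytic in $\Omega$. Your remark on connectedness is apt, since for disconnected $\Omega$ the statement genuinely fails when $u$ is supported in components not meeting $\omega$; connectedness (implicit in the word ``domain'') must therefore be assumed rather than worked around by restricting to a component.
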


\section{Back to vibrating membranes} In this section, we explain precisely what happens in a rectangle and we formulate two open questions.  We consider a rectangle $\Omega = (0, \pi)\times (0, L)$ and the horizontal strip $\omega = (\alpha, \beta)\times (0, L) $ where $0\le\alpha<\beta\le \pi$. Assuming that a solution $u$ of \eqref{memb} remains nonnegative in $J\times\omega$, we consider the function $$w(t, x): = \int _0^L u(t, x, y) \sin (\gamma y) dy $$ where $$ \gamma : = \frac{\pi}{L}.  $$ Therefore $$w_{tt} (t, x): = \int _0^L u_{tt}(t, x, y) \sin (\gamma y) dy $$
Therefore $$w_{xx} (t, x): = \int _0^L u_{xx}(t, x, y) \sin (\gamma y) dy = \int _0^L (\Delta ut, x, y)- u_{yy}(t, x, y) \sin (\gamma y) dy$$
$$= \int _0^L [\Delta u(t, x, y)- \gamma^2u(t, x, y)] \sin (\gamma y) dy = \int _0^L \Delta u(t, x, y)\sin (\gamma y) dy- \gamma^2 w (t x) $$ hence w satisfies the equation 
\begin{equation} \label{string2} \begin{cases}
w_{tt}-w_{xx} + \gamma^2 w = 0 \quad  \rm{on}\,\,  \R\times(0, l)   \\
w(t, 0) = w(t, \pi) = 0 \quad  \rm{on}\,\,  \R 
\end{cases}\end{equation}  
Now $$ \forall (t,x,y)\in J\times\omega, \,\,u(t,x,y) \ge 0 \Longrightarrow \forall (t, x)\in J\times (\alpha, \beta),\,\,  u(t,x)\ge 0. $$  By relying for instance on \cite{OscPort} we obtain \begin{proposition} Assuming that a solution $u$ of \eqref{memb}  with $\Omega = (0, \pi)\times (0, L)$ remains nonnegative in $J\times\omega$ with $\omega = (\alpha, \beta)\times (0, L) $ and $0\le\alpha<\beta\le \pi$, we have either  $ u\equiv 0 $ or $ |J| < 2\pi .$ \end{proposition}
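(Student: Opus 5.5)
We reduce the problem to the one-dimensional string equation \eqref{string2} satisfied by the projected function
$$w(t,x)=\int_0^L u(t,x,y)\sin(\gamma y)\,dy, \qquad \gamma=\frac{\pi}{L},$$
and then apply the oscillation result for Klein--Gordon strings from \cite{OscPort}.

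\textbf{Step 1: $w$ solves a Klein--Gordon string equation.} Since $\sin(\gamma y)\ge 0$ on $(0,L)$, we have $u\ge 0$ on $J\times\omega$, hence $w\ge 0$ on $J\times(\alpha,\beta)$. The computation preceding the statement shows that
$$w_{tt}-w_{xx}+\gamma^2 w=0 \quad\text{on } \R\times(0,\pi), \qquad w(t,0)=w(t,\pi)=0.$$
The $\Delta u$ term is absorbed using $u_{tt}=\Delta u$. The $y$-integration by parts produces no boundary terms, because both $u$ and $\sin(\gamma y)$ vanish at $y=0$ and $y=L$.

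To make this rigorous for mild solutions, I would expand $u$ on the eigenbasis $\sin(nx)\sin(k\gamma y)$. Then $w$ is exactly the $k=1$ component,
$$w(t,x)=\frac{L}{2}\sum_n \bigl[a_{n1}\cos(\mu_{n1}t)+b_{n1}\sin(\mu_{n1}t)\bigr]\sin(nx), \qquad \mu_{n1}=\sqrt{n^2+\gamma^2}.$$
In this form $w$ is a finite-energy solution of \eqref{string} with $l=\pi$ and $a=\gamma$, and no regularity issue arises.

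\textbf{Step 2: apply the string result.} I would invoke the result of \cite{CH1, CH2, OscPort}: a nonzero solution of \eqref{string} with $l=\pi$ and any $a\ge 0$ cannot stay nonnegative on $J\times(\alpha,\beta)$ when $|J|\ge 2l=2\pi$. This is the form in which that result is used in Section 4. Consequently, if $|J|\ge 2\pi$, then $w\equiv 0$.

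The main point to check is that the cited result is indeed local in $x$: positivity on an arbitrary subinterval $(\alpha,\beta)$ over a time interval of length $2l$ must force the solution to vanish. The proof in \cite{OscPort} does this via d'Alembert-type propagation along characteristics, combined with time-periodicity or energy arguments. If only the whole-interval version were available, I would use finite propagation speed. Over a time length $2\pi$, the characteristics issued from $J\times(\alpha,\beta)$ reflect off both ends and cover the full strip. Combined with the nonnegativity and the Klein--Gordon version of d'Alembert's formula, this should transport the constraint to all of $(0,\pi)$.

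\textbf{Step 3: from $w\equiv 0$ to $u\equiv 0$.} This step is where I expect the real obstacle. The identity $w\equiv 0$ only gives $a_{n1}=b_{n1}=0$ for all $n$, i.e.\ $u$ has no first mode in $y$. To conclude, I would combine this with positivity. For each fixed $(t,x)\in J\times(\alpha,\beta)$, the function $y\mapsto u(t,x,y)$ is nonnegative on $(0,L)$. Its integral against the strictly positive weight $\sin(\gamma y)$ is $w(t,x)=0$. Hence $u(t,x,\cdot)\equiv 0$, so $u$ vanishes on $J\times\omega$.

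Then unique continuation finishes the argument. The time Fourier coefficients \eqref{comp} are not directly available on a finite $J$, so I would argue differently. Since $|J|\ge 2\pi$ exceeds the control time for the rectangle with a full horizontal strip (geometric control holds, all rays hitting $\omega$ within time $2\pi$), an observability inequality gives $u\equiv 0$. Alternatively, expanding in $y$-modes, each mode $u_k$ solves a 1D Klein--Gordon equation vanishing on $J\times(\alpha,\beta)$ with $|J|\ge 2\pi$, which forces $u_k\equiv 0$ by the 1D observability or Ingham argument. Hence $u\equiv 0$, which contradicts the assumption, so $|J|<2\pi$.
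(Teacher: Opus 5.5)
Your Steps 1 and 2 are exactly the paper's argument. The paper projects onto $\sin(\gamma y)$, observes $w\ge 0$ on $J\times(\alpha,\beta)$, and concludes by citing \cite{OscPort}. It leaves implicit how $w\equiv 0$ yields $u\equiv 0$. You are right that this needs an argument, and the first half of your Step 3 is correct: nonnegativity of $u$ on $J\times\omega$ together with $\sin(\gamma y)>0$ forces $u=0$ a.e.\ on $J\times\omega$. Your locality worry in Step 2 is unnecessary. Section 4 recalls the string result in pointwise form, for $t\mapsto w(t,x_0)$ with $x_0/\pi$ irrational, and such an $x_0$ exists in $(\alpha,\beta)$. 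The characteristic-transport fallback is only heuristic and can be dropped.

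The genuine error is your main route for finishing Step 3. The geometric control condition does \emph{not} hold for $\omega=(\alpha,\beta)\times(0,L)$ unless $\omega=\Omega$. A ray travelling parallel to the $y$-axis at an abscissa $x_0\notin[\alpha,\beta]$ bounces between $y=0$ and $y=L$ forever without entering $\omega$. Nearly vertical rays take arbitrarily long to reach it. So no observability inequality holds from this strip in any time, since solutions concentrated along such rays have almost all their energy outside $\omega$. That sentence must go.

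What you need is only unique continuation, and your alternative supplies it. The mode $u_k(t,x)=\frac{2}{L}\int_0^L u(t,x,y)\sin(k\gamma y)\,dy$ solves \eqref{string} with $l=\pi$, $a=k\gamma$, and vanishes on $J\times(\alpha,\beta)$. The cleanest way to conclude $u_k\equiv 0$ is to apply the same cited string theorem again. A function vanishing there is in particular nonnegative, and the theorem holds for every $a\ge 0$ with the bound $2l$. Alternatively, use 1D uniqueness for the Cauchy problem in the $x$-direction across $J\times(\alpha,\beta)$, which needs only $|J|>2\max(\alpha,\pi-\beta)$. Do not rely on the standard Ingham theorem here: even with asymptotic gap $1$ it requires $|J|>2\pi$ strictly, so it does not cover the endpoint $|J|=2\pi$.

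With this replacement your proof is complete. It is also more explicit than the paper's on the passage from $w$ to $u$.
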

\begin{remark} In case $\alpha = 0 $ and $\beta=L$, the condition is reinforced and becomes $$ |J| \le \frac{\pi} { \sqrt {1+ \gamma^2 }}.$$ which represents a jump by a factor larger than 2 which tends to infinity with $\gamma.$ We have no idea of what could be the best estimate in general as a function of $(\alpha, \beta)$.\end{remark}
\begin{remark} In the special case $\Omega = (0,\pi)\times (0,\pi)$, and  $\omega = (0, \frac{\pi}{2})\times (0, \frac{\pi}{2}) $, we know by {\rm Theorem \ref{gen}} that any solution $u$ of \eqref{memb} has a uniform oscillation time on $\omega$, but it is still unknown whether there is an oscillation time independent of $u$, a problem raised in 1985 and already mentioned in {\rm \cite{Op}}.   \end{remark}

\end{document}